\RequirePackage[hyphens]{url}
\documentclass{article}
\usepackage{breakurl}
\PassOptionsToPackage{hyphens}{url}
\usepackage{amsmath, amsthm, amscd, amsfonts, amssymb, graphicx, color}
\usepackage{enumerate}

\theoremstyle{theorem}
\newtheorem{theorem}{Theorem}
\newtheorem{corollary}{Corollary}
\theoremstyle{corollary}

\newtheorem{lemma}{Lemma}
\newtheorem{definition}{Definition}
\newtheorem{remark}{Remark}
\theoremstyle{Definition}

\newtheorem{example}{Example}
\theoremstyle{Example}
\newtheorem{proposition}{Proposition}

\theoremstyle{Remark}
\numberwithin{equation}{section}

\newcommand{\tref}[1]{Theorem~\textup{\ref{#1}}}

\newcommand{\cref}[1]{Corollary~\textup{\ref{#1}}}
\newcommand{\lref}[1]{Lemma~\textup{\ref{#1}}}
\newcommand{\rref}[1]{Remark~\textup{\ref{#1}}}
\newcommand{\dref}[1]{Definition~\textup{\ref{#1}}}

\author{Antoine Mhanna}
\title{On Families of Numerical Semigroups with two Coprime Generators and Dimension three}
\date{Kfardebian, Lebanon, tmhanat@yahoo.com}
\begin{document}
\maketitle

\begin{abstract}
This paper examines in a new way  some  known facts about numerical   semigroups  especially  when the number of minimal generators  (that is the embedding dimension) is at most three and  at least two minimal generators are coprime. For such semigroups, an algorithm  is re-investigated to find  the pseudo-Frobenius numbers. A certain family  of $n$ dimensional  numerical semigroups of type at most $n-1$ is also given.  \end{abstract}
MSC 2020: {11D07; 11A05.}\\
Keywords: {Numerical Semigroup,  pseudo-Frobenius Number.}
\section{Introduction and preliminaries}
 	Let $a_1,\ldots,a_n$ be  $n$ positive integers with $\gcd(a_1,\ldots,a_n)=1$. The set 
$\displaystyle S=\Bigg\{\sum_{i=1}^n{\lambda}_ia_i\big{|} n\in{\mathbb{N}}, {\lambda}_i \ge 0, \text{for all } i\Bigg\}$ is called the numerical  semigroup $S$ and $a_1,\ldots,a_n$ are called the generators of $S$. They are  minimal generators if we cannot take out a generator $a_i$ without changing the set $S$; in general we denote $S$ by   $\langle a_1,\ldots,a_n\rangle$.  For a numerical semigroup $S\neq \mathbb{N}$ the number $F(S):=\max\{n\in {\mathbb{N}}| n\notin S\}$ exists (see \cite[Theorem 1.0.1]{fon} for   proof) and is called  the  Frobenius number of $S$. We will suppose hereafter that $a_1<a_2$. 
The interested reader can see \cite{w}   and the references therein (in particular \cite{rou}) for an  extensive reading on numerical semigroups. 

There are different algorithms   computing   $F(S)$ with $S$ a three minimally generated semigroup; among them is  \cite{joh}, also \cite{san,fel}, and R\"odseth's  formula \cite{z}.  We refer to  \cite{fon} for  a further discussion on these and other  generalized  algorithms.

In this section we revisit the classical semigroup $\langle a_1,a_2\rangle$. We will represent a proof from \cite[p.\ 134]{a}  for two main purposes: one,  this makes the article more self-contained and two, reformulates in details an often cited result of Sylvester \cite{a} on the Frobenius problem.

\begin{definition}\label{lll}Let $a_1$ and $a_2$ be positive integers with $\gcd(a_1,a_2)=1$.
The  set of positive integers $x$ of the form $x=\alpha a_i-\beta a_{3-i}$, $i=1$ or $2$ with $0<\alpha <a_{3-i}$ and $0<\beta <a_i$  is called  the non-compound set and is denoted by $NC$.
The  set of integers $x$ of the form $x=\alpha a_1+\beta a_2$ with $\alpha >0,$ $\beta >0$, and $x<a_1a_2$  is called the compound set  and is denoted  by $C$.
\end{definition}
\begin{lemma}\cite{bran}\label{Se} Let  $S=\langle a_1,a_2\rangle$  and let $x$ be a positive integer;  we have $x\in NC$ if and only if  $0<x<a_1a_2$ and $x \notin S$.

\end{lemma}
\begin{proof} Suppose  $x\in S$ and in $NC$ with $x<a_1a_2$. Consequently,  $x=aa_1+ba_2=\alpha a_i-\beta a_{3-i}$ for some non-negative numbers $a,b$;    but this contradicts the fact that  $\alpha <a_{3-i}$ and $\beta <a_i$ (for example if $i=1$, we have $ (a-\alpha)a_1=-(\beta+b)a_2$ and $|a-\alpha|<a_2)$.
Take the positive integers less than $a_1a_2$ having neither $a_1$ nor $a_2$ as a divisor, there are $(a_1-1)(a_2-1)$  such integers. Arrange these numbers as pairs $(x,y)$ summing $a_1a_2$;
if $x\in NC$, then $a_1a_2-x=y \in S$ and 
 if $y \in C$, then $(a_1a_2-y)=x\notin S$, which means that  $   x \in NC$.
\end{proof}

We have proven that $|NC|=|C|=\dfrac{(a_1-1)(a_2-1)}{2}. $  The number $\mathcal{F}(S)=(a_1-1)(a_2-1)-1=a_1a_2-a_1-a_2$ does not belong to $S$  since $a_1a_2-\mathcal{F}(S) $ is  compound. 
 Any number $y>a_1a_2$ can be written as $\alpha a_1a_2+r$ and it is easily seen that since $r<a_1a_2$, the remainder $r$ is either in $S$ or in $NC$. Therefore  all $n>\mathcal{F}(S)$ do belong to $S$. 

Now  we can characterize all the numbers that are not in $S=\langle a_1,a_2\rangle$.
  \begin{proposition}\label{po}The set $NC$ is the set $\{\mathcal{F}(S)-y| y\in S, y<\mathcal{F}(S)\}$.
\end{proposition}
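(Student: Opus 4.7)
The plan is to prove the two inclusions separately, using the theorem as the bridge between the $NC$ description and the condition $x \notin S$.

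For the containment $\{F(S)-y : y \in S,\ y < F(S)\} \subseteq NC$, I would take $x = F(S) - y$ with $y \in S$, $y < F(S)$, and observe that $0 < x \le F(S) < a_1 a_2$. If $x$ were in $S$, then $x + y = F(S)$ would also lie in $S$ by closure under addition, contradicting the fact (noted just before the proposition) that $F(S) \notin S$. Hence $x \notin S$, so by the theorem $x \in NC$.

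For the reverse inclusion, I would start from $x \in NC$, which by definition means $x = \alpha a_i - \beta a_{3-i}$ with $0 < \alpha < a_{3-i}$ and $0 < \beta < a_i$. Taking $i=1$ (the case $i=2$ is entirely symmetric) and substituting $F(S) = a_1 a_2 - a_1 - a_2$, a short algebraic rearrangement gives
$$F(S) - x = (a_2 - 1 - \alpha)\, a_1 + (\beta - 1)\, a_2.$$
Both coefficients are non-negative because $\alpha \le a_2 - 1$ and $\beta \ge 1$, so $y := F(S) - x$ belongs to $S$; moreover $y < F(S)$ since $x > 0$, which displays $x$ in the form $F(S) - y$ required by the proposition.

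I do not anticipate a substantive obstacle, only the bookkeeping around the two $NC$-cases and a careful use of the Frobenius property $F(S) \notin S$. If desired, the second inclusion can be bypassed via a cardinality shortcut: the injective map $y \mapsto F(S) - y$ gives $|\{F(S)-y : y \in S,\ y < F(S)\}| = |E| = (a_1-1)(a_2-1)/2 = |NC|$, so the first inclusion already forces equality without any explicit inversion.
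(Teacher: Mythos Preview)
Your proof is correct and is precisely the kind of detailed verification the paper leaves implicit: the paper's own proof consists of the single sentence ``A direct application of previous arguments,'' so your two-inclusion argument (and the cardinality shortcut you offer as an alternative) simply spells out what those previous arguments are. Both your explicit algebraic inversion and the counting via $|E|=|NC|$ are already present in the discussion preceding the proposition, so there is no methodological divergence to report.
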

\begin{proof}A direct application of previous result.
\end{proof}
\begin{remark}\label{rk}
Notice at last that any integer $x < a_1a_2$ in $S$  has a unique representation as $x=\alpha a_1 + \beta a_2$ with $(\alpha, \beta) \in \mathbb{N}^2$, $\alpha< a_2$, and $\beta< a_1$; assuming the contrary  $\alpha a_1 + \beta a_2=\nu a_1 + \gamma a_2$, that is $(\alpha -\nu)a_1 =(\gamma-\beta) a_2$ and we can not  have $a_2$ divides $(\alpha-\nu)$ or $a_1$ divides $(\gamma-\beta)$.
A number $x$ in $NC$  can be written as $a_1 a_2-wa_1-ra_2=(a_2-w) a_1-r a_2$, where $1\le w<a_2 $ and $1\le r <a_1$. Hence, the uniqueness of $(w,r)\in \mathbb{N}^2$ follows similarly by contradiction.\end{remark}

We end this expository section by the following definition.
\begin{definition}
For a numerical semigroup $S$, we have $T(S):=\{x\in \mathbb{N}| x\not\in S, x+s\in S, \text{ for all } s\in S, s>0\}$. The cardinality of $T(S)$, denoted $t(S)$, is called the type of $S$  and a number in $T(S)$ is called a pseudo-Frobenius number.

The Ap\'ery set of $S$ with respect to $n\in S$ is the set ${\rm{Ap}}(S,n)=\{s\in S|s-n\notin S\}$ and the genus of 
$S$, denoted $g(S)$, is the cardinality of  $\{\mathbb{N}\backslash S\}$.

\end{definition}
  
\section{Adding  minimal generator}
Here we consider numerical semigroups $S$ minimally generated by $a_1,a_2,a_3$; the integers $a_1$ and $a_2$ are coprime with $a_1<a_2$. The next remark is already well known and  is very useful, see \cite[Chapter 1]{w}.
\begin{remark}\label{w}\cite{joh,rou}
 Let $a_1,a_2,\ldots,a_n$ be positive integers. If $\gcd(a_2,\ldots,a_n ) = d$ and
$a_j=d a'_j$ for each $j>1$, then the following holds:
\begin{itemize}
\renewcommand\labelitemi{$\scriptstyle\bullet$}
\item $F(\langle a_1,a_2,\ldots,a_n\rangle)=d F(\langle a_1,a'_2,\ldots,a'_n\rangle)+a_1(d-1)$  (Johnson's formula). 
\item The type of $\langle a_1,a_2,\ldots,a_n\rangle$ equals type of $\langle a_1,a'_2,\ldots,a'_n\rangle$.
\item The type of $S=\langle a_1,a_2,a_3\rangle$ is at most two.
\item We have the equivalence: $$w\in{\rm{Ap}}(\langle a_1,a'_2,\ldots,a'_n\rangle,a_1)\Longleftrightarrow  dw\in {\rm{Ap}}(\langle a_1,a_2,\ldots,a_n\rangle,a_1).$$
\end{itemize}
\end{remark}
\begin{remark}\label{qq} A number $x$ in $NC$ is written as $a_1 a_2-k a_2-j a_1$ and we have  $0<k<a_1$ if and only if  $0<j<a_2$. This is useful when $ix \in NC$  $(i\in \mathbb{N}^*)$ and $0<i k<a_1$ so $0<ij-(i-1)a_2<a_2$ or $0<i j<a_2$ so $0<i k-(i-1) a_1<a_1$.
\end{remark}
From the unique   representation of $NC$  numbers we have \lref{ert}.
\begin{lemma} \label{ert}Let $S=\langle a_1,a_2,a_3\rangle$, $Q=\langle a_1,a_2\rangle$, and $a_3=a_1a_2-k a_2-ja_1$ for some $(k,j)$ with $1\le k< a_1$ and $1\le j < a_2$. If  $2 a_3 \in  Q$, then the possible pseudo-Frobenius numbers are $\{a_1 a_2-(k+1) a_2-a_1,a_1 a_2-a_2-(j+1) a_1\}$. 
\end{lemma}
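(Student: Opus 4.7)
The plan is to first use the hypothesis together with \rref{qq} to pin down $S\cap NC$ exactly, and then read off the Pseudo-Frobenius candidates by imposing only the conditions $x+a_1\in S$ and $x+a_2\in S$ on the resulting gaps of $S$.

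First I would show that under the hypothesis every multiple $\alpha a_3$ with $\alpha\ge 2$ actually belongs to $Q$. By \rref{qq}, if such a multiple sits in $NC$, its (unique) $NC$-representation must be either $a_1a_2-(\alpha k)a_2-(\alpha j-(\alpha-1)a_2)a_1$ (the case $\alpha k<a_1$) or $a_1a_2-(\alpha k-(\alpha-1)a_1)a_2-(\alpha j)a_1$ (the case $\alpha j<a_2$). In the first case the $a_2$-coefficient is $\alpha k>k$, in the second the $a_1$-coefficient is $\alpha j>j$; so in either case one of the two coordinates strictly exceeds the corresponding coordinate of $a_3$. Consequently $\alpha a_3$ cannot belong to a $(j,k)$-decreasing sequence based at $a_3$, i.e.\ $\alpha a_3\notin A_{a_3}$, and the hypothesis forces $\alpha a_3\in Q$.

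Granted this reduction, every element of $S\setminus Q$ has the form $a_3+\lambda a_1+\mu a_2$ with $\lambda,\mu\ge 0$. Rewriting this as $a_1a_2-(k-\mu)a_2-(j-\lambda)a_1$, I would verify that it lies in $NC$ precisely when $0\le\lambda\le j-1$ and $0\le\mu\le k-1$, whereas in any of the remaining ranges a direct rearrangement exhibits it as a nonnegative combination of $a_1$ and $a_2$, hence as an element of $Q$. Since the higher multiples of $a_3$ now contribute nothing new, this yields
\[
S\cap NC=\{(j',k')\in NC:1\le j'\le j,\ 1\le k'\le k\},
\]
so the gaps of $S$ inside $NC$ are exactly the pairs $(j',k')\in NC$ with $j'>j$ or $k'>k$.

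Finally I would impose the PF conditions on such a gap $x=(j',k')$. By uniqueness of the $NC$-representation, $x+a_1\in S$ means either $j'=1$ (in which case $x+a_1=(a_1-k')a_2\in Q$ automatically) or $j'\ge 2$ with $(j'-1,k')$ in the above rectangle, i.e.\ $j'\le j+1$ and $k'\le k$; symmetrically for $x+a_2$. A case split on whether $j'>j$ or $k'>k$ then leaves only $(j',k')=(j+1,1)$ or $(j',k')=(1,k+1)$, matching the two displayed candidates. The condition $x+a_3\in S$ is not needed for this containment, which is why the lemma asserts only that the listed numbers are the ``possible'' PF numbers.

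The main obstacle is the first step: extracting from the compact wording of $A_{a_3}$ and the case-analysis of \rref{qq} that the hypothesis really collapses to ``$\alpha a_3\in Q$ for every $\alpha\ge 2$''. Once that reduction is made, the remainder is a clean combinatorial argument on the $(j,k)$-lattice using only the uniqueness of the $NC$-representation.
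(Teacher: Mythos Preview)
Your first reduction is wrong: the hypothesis does \emph{not} force $\alpha a_3\in Q$ for every $\alpha\ge 2$. The two cases you extract from \rref{qq} are implications (``if $\alpha k<a_1$ then \ldots'', ``if $\alpha j<a_2$ then \ldots''), not a dichotomy, so an $\alpha a_3\in NC$ need not have $NC$-coordinates $(\,\cdot\,,\alpha k)$ or $(\alpha j,\,\cdot\,)$. Concretely, take $a_1=5$, $a_2=7$, $a_3=6=5\cdot 7-2\cdot 7-3\cdot 5$, so $(j,k)=(3,2)$. Then $2a_3=12=5+7\in Q$ and $\alpha a_3>F(Q)=23$ for $\alpha\ge 4$, so the hypothesis of the lemma holds; yet $3a_3=18=5\cdot 7-1\cdot 7-2\cdot 5$ has $NC$-coordinates $(2,1)$, hence $18\in A_{a_3}$ and $18\notin Q$. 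Thus your claimed step fails.

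The repair is shorter than your detour and is exactly what the paper has in mind when it says the lemma follows ``from this definition and the unique representation of $NC$ numbers''. By construction $A_{a_3}=\{a_3+\lambda a_1+\mu a_2:0\le\lambda\le j-1,\ 0\le\mu\le k-1\}\subseteq a_3+(Q\cup\{0\})$, so $\alpha a_3\in A_{a_3}\cup Q$ already gives $\alpha a_3+(Q\cup\{0\})\subseteq\bigl(a_3+(Q\cup\{0\})\bigr)\cup Q$ for every $\alpha\ge 1$. This yields $S\setminus Q\subseteq a_3+(Q\cup\{0\})$ directly, without ever needing $\alpha a_3\in Q$. From that point your steps~2 and~3 (identifying $S\cap NC$ with the rectangle $\{1\le j'\le j,\ 1\le k'\le k\}$ and then running the case split on $x+a_1,x+a_2\in S$) are correct and constitute a clean unpacking of what the paper leaves implicit.
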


\begin{corollary}Let $S=\langle a, ka+d, ha+2 d\rangle$ with $d\ge 1$ and $h$ a non-negative integer. For $a=2i+1$ odd, if $\gcd(a,d)=1$ and  $\lfloor\frac{h}{2}\rfloor<k<h(\frac{a+1}{2})+d$,   then  \\$T(S)= \{F_1:=a(ha+2d)- a- (\frac{a-1}{2}+1)(ha+2d) , F_0:= a(ha+2d)-(h \frac{a-1}{2}+d+1-k+h)a-  (ha+2d)\}$. \end{corollary}
\begin{proof}Write first $$ka+d=a(ha+2d)- \Big(h \frac{a-1}{2}+d-k+h\Big)a-\Big(\frac{a-1}{2}\Big)(ha+2d)$$ and $$ha+2d=2(ka+d)-(2k-h)a. $$ From these expressions and the conditions stated, we know that the generators of $S$ are minimal. Take $Q=\langle a,h a+2 d\rangle$, where $a_1=a$  and $a_2=(ha+2 d)$. Apply \lref{ert}; since $F_0+a_3=(2k-h-1)(a_1)+i(a_2)\in Q$ and $F_1+a_3=a_1(d-1+hi+k)\in Q$  we get the result. 
\end{proof}

The case $k=h$ is well known \cite{tm}. See \cite[Theorem 5.4.3]{fon}  and  \cite{tm} for  a general family of  numerical semigroups  with an   arithmetic progression of generators.

\begin{definition}\label{km}Let $S=\langle a_1,a_2,a_3\rangle$ and $Q=\langle a_1,a_2\rangle$.
If  $x:= a_1a_2-k a_2-j a_1$,  $1\le k<a_1$, and $1\le j<a_2$, then the integers $k$ and  $j$  are called the level and the under-level of $x$, respectively.  Let $m$ be the least integer such  that $m\cdot a_3\in  Q $ with $m\ge 3$, $x_1:=a_3$ and let $x_i:=ia_3\notin Q$ for every $i$, $2\le i\le m-1$. By \rref{qq} we have  either   $i k<a_1$ (so $i j>a_2$) with $i j-(i-1) a_2<a_2 $,     which we refer to as an upgrade start  or $i j<a_2$ (so $i k>a_1$) with $i k-(i-1) a_1<a_1$  referred to as a downgrade start, where  $i$ verifies $2\le i\le s_1\le m-1$ for a certain $s_1$.
\end{definition}
Next we represent an algorithm to find $T(S)$.
\begin{definition}\label{kmm}Keeping the previous notation, if $h$ denotes the highest \textit{level} and $l$ the highest \textit{under-level} among $x_i$, then  our  pseudo-Frobenius numbers  candidates are of the form $F_0:=a_1 a_2-a_2-(l+1) a_1+w a_3$ for some $w\ge 0$ and $F_1:=a_1 a_2-(h+1) a_2-a_1+r a_3$ for some $r\ge 0$, where the integers $w$ and $r$ are the least integers such that $F_1+a_3\in Q$ and  $F_0+a_3\in Q$, respectively.

\end{definition}
It is worth mentioning that the two previous  pseudo-Frobenius candidates are the  precise formulas   for  pseudo-Frobenius numbers of $S$. This  is proved by Ralf Fr\"oberg et al.\ \cite[Theorem 11, second proof]{rou}  where  similar characterizations of pseudo-Frobenius numbers  for any   $S=\langle a_1,a_2,a_3\rangle$ are  given,  yielding  that  the \textit{type}  of  $\langle a_1,a_2,a_3\rangle$ is at most two.

\begin{example}\label{pppp}We illustrate the above definitions here; take $\langle 11,13,10\rangle$, $a_3=10=11\cdot 13-5\cdot 11-6\cdot 13$, and it can be verified that $m=5$, $(5\cdot a_3\in Q=\langle 11,13\rangle)$. The semigroup has a \textit{downgrade start} with  $s_1=2$, $h=7$, and $ l=10$. The pseudo-Frobenius numbers are $11\cdot13-11-8\cdot13+10=38$ and $11\cdot13-11\cdot11-13+ 20=29$.
\end{example}

The previous lemma  has the following extension:

\begin{theorem}\label{ww}Let $S=\langle a_1,\ldots,a_n\rangle$ and  $Q=\langle a_1,a_2\rangle$, where $a_1$ and $a_2$ are two coprime generators. If  $a_i+a_j\in Q$  for all $i$ and $j $ $\ge 3$, then the type of $S $ is at most $n-1$.
\end{theorem}

\begin{proof}The idea is simple in the sense that we get at most $n-1$ possible pseudo-Frobenius numbers   $F_i\notin S$  verifying both  $F_i+a_1 \in S$ and $F_i+a_2\in S$. By \lref{Se} we can write  $a_i:=a_1a_2-{\alpha}_ia_2-{\beta}_ia_1$ with $i\ge 3$ for some $1\le {\alpha}_i<a_1$ and $1\le {\beta}_i<a_2$. Arrange these generators so sequence $({\alpha}_i)_{i\ge 3}$ is decreasing (strictly) and thus, $({\beta}_i)_{i\ge 3}$ is  strictly increasing (since otherwise the difference between two consecutive minimal generators is in $Q$). Set  $$\begin{aligned}F_1&=a_1a_2-a_2-\underset{3\le i \le n}{\max}({\beta}_i+1)a_1,\\F_2&=a_1a_2-\underset{3\le i \le n}{\max}({\alpha}_i+1)a_2-a_1,\end{aligned}$$  and for $3\le i\le n-1$, set   $$F_{i}=a_1a_2-(\min({\alpha}_i,{\alpha}_{i+1})+1)a_2-(\min({\beta}_i,{\beta}_{i+1})+1)a_1.$$

The proof is direct for possible pseudo-Frobenius numbers having their \textit{level} or \textit{under-level} equals  $1$ (the numbers $F_1$ and  $F_2$ verify $F_1\notin S$, $F_2\notin S$, and $F_i+a_j\in S$ for any $i=1,2$ and $j=1,2)$.

Otherwise we have the result  from: $\{a_i+s| s\in Q, i\ge 3 \}\cup Q=S $ and  \rref{rk}. For example, one can prove that if $F_i=a_1 a_2-k a_2-j a_1$ is a pseudo-Frobenius number, $k>1$, and $j>1$, then  for some $i$ we have  ${\alpha}_{i+1}< k< {\alpha}_{i}$ and   ${\beta}_i< j< {\beta}_{i+1}$,  to  get the necessity of the given expressions.\end{proof}

\begin{remark}It is well known that  numerical semigroups $S$ of embedding dimension $n$ with  $t(S)\le n-1$ verify Wilf's conjecture; see \cite[Chapter 1]{w}.\end{remark}
\section{Algorithm's Applications}
Before  giving some examples and applications, we point out that we do not discuss or compare other algorithms with the one in \dref{kmm}. For example, the algorithm formulas \cite[Corollary 12]{san}, proved by giving \textit{Ap\'ery sets}, require additional parameters assuming pairwise coprime minimal generators.
\begin{theorem} \cite{j} \label{j}
Let $S=\langle a,b,c\rangle$, $a<b<c$, $d=\gcd(a,b)$, $n=\frac{bv}{d}-1$, $p=n(\frac{cdv}{a}-1)$, $v\in \mathbb{N}$ such that $bv\equiv d\pmod a$ with $vd<a$, then $$F(\langle a,b,c\rangle)=\dfrac{ab(F(\langle n,n+1,n+p\rangle)+2n+1)}{dn(n+1)}+(d-1)c-(a+b).$$

\end{theorem}
As stated by Igor Kan et al.\ \cite[Theorem 1]{j}, \tref{j} follows by applying Johnson's formula (\rref{w}) three times; it could be better seen if one starts  considering  $b=db'$, $a=da'$. Replacing the different variables $n,p$ by the given values, the computation should be applied two times to the semigroup $\langle n,n+1,n+p\rangle$. Lastly the formula is applied for $\langle a,b,c\rangle$ with $\gcd(a,b)=d$.
\begin{theorem}\cite{j}\label{jj} Let $S=\langle a,a+1,a+p\rangle$. If $a\ge p(p-4)+2$, then $$F(S)=\left\lfloor\frac{a}{p}\right\rfloor(a+1)+(p-1)\left\lfloor\frac{a+1}{p}\right\rfloor+(p-3)a-1.$$
\end{theorem}
\subsection{A downgrade semigroup}
Next we present particular semigroups that were introduced  by A. M. Robles-P\'erez and J. C. Rosales   \cite{fra}. 
\begin{lemma}\label{waw}Let $S=\langle a,a+1,ia+d\rangle$, $a=dk+j$,  $0< i< d<a$, and $1\le j<d$. If  $(k+1) i+j+1=d$, then the \textit{type} of $S$ is one. 
\end{lemma}
\begin{proof}The proof makes use of \rref{w}: by finding a common factor $f$ of $a+1$, $ia+d$ and showing that $a\in \langle \frac{a+1}{f}, \frac{ia+d}{f}\rangle$. Notice that $a=j(k+1)+k(i(1+k)+1)$ from the definition of $a$ and $d$ with $a+1=(d-i)(k+1)$ and $ia+d=(d-i)(1+i(1+k))$. The result follows taking $f=d-i$.

\end{proof}

\begin{theorem}\cite{fra}\label{l} Let $S=\langle a,b,c\rangle $ with pairwise coprime minimal generators and so $c=bh-ar=a b-(a-h) b-r a=(b-r)a-(a-h)b$. If $\dfrac{b}{r}\ge\left \lceil {\dfrac{a}{h}}\right \rceil$, then $T(S)=\left \{\left \lfloor\dfrac{a-1}{h}\right\rfloor c+ra-b-a, (a-1)b-\left \lfloor\dfrac{a-1}{h}\right\rfloor ra-a\right\}$. \end{theorem}

Using \rref{w} and \cite[Proposition 2.20]{w},  it is not hard to prove that  the formulas in the previous theorem are invariant with $\gcd(a,c)=d$ or $\gcd(b,c)=d$ and $d>1$ ($\gcd(a,b)=1$ necessarily). Consequently, by \tref{j}, we see that \tref{l} and \tref{sss} are equivalent for those numerical semigroups of \textit{type} 2. 
\begin{theorem}\label{sss}Let  $S=\langle a,a+1,ia+d\rangle$, $Q=\langle a,a+1\rangle$, where $a=dk+j$, $a_3=ia+d$,  $0< i< d<a$, $1\le j<d$, and $d\ge 3$.
If $(k+1)i+j\ge d-1$,  then  $T(S)=\{F_1,F_2\}$ for:  $$F_1=a(a+1)-(k(d-i)+1) a-(a+1)+w a_3,$$ $$F_2=a(a+1)-a-(a-d+1)(a+1)+(k-1)a_3,$$
 where $w=0$ if $(k+1)i+j\ge d$ and $w=k$ otherwise. \end{theorem}
\begin{proof}Write  $ia+d= a(a+1)-(d-i) a-(a-d)(a+1)$,  we are in a \textit{downgrade start} and $$na_3=a(a+1)-n(d-i) a-(a-dn)(a+1)$$ for all $n$  with $1\le n\le k$. The number    $(k+1) a_3\in Q$  since $$(k+1)a_3=a(a+1)-(a-(i(k+1)+j-d)) a+(d-j)(a+1).$$

Set by definition $wd:=3+x$, if $$F= a(a+1)-(k(d-i)+1) a-(a+1),$$ then  $F\notin S$ and $F+wa_3=(-2+j-x+wi-1+ki)(dk+j)+(2+x)(dk+j+1) $ is in $Q$ if, in particular: $$ki+j\ge w(d-i) \text{ and }2+x=wd-1\ge 0\quad (1),$$ $$\text{ or } -(a+1)<ki+j-w(d-i)<0 \text{  and }wd-1\ge a \quad (2).$$ We call  $w_0$ the smallest such integer  $\ge 1$;  but it can be verified 
that $w_0$ exists as $w_0=1$  if $(k+1)i+j\ge d$ from $(1)$ and $w_0= k+1$ otherwise $(2)$. Thus, first  $$F_1= a(a+1)-(k(d-i)+1) a-(a+1)+(w_0-1)(ia+d)\in T(S).$$
Take $G=a(a+1)-a-(a-d+1)(a+1)+k(ia+d)$ and set $y=d-1-j$, $G=a(ki+j)+y(a+1)\in Q$.
Now take $F=a(d-2)+d-1+(k-t)(ia+d)$ with $1\le t\le k$, clearly  $F-G=-(td+ita)$ thus, $$\begin{aligned}F&=a(ki+j+td+ita)-(-d+1+j+td+ita)(a+1)\\&=a(ki+j+td-it)-(1+j+(t-1)d)(a+1)\end{aligned}$$ and clearly $F$ is not  in $Q$ 
if and only if $ki+j+t(d-i)< a+1$  and $1+j+(t-1)d>0$. We call $t_0$ the smallest such integer $\ge 1$; but it can be verified that $(k-1)i+j+d<a+1$,  that is $(k-1)i<(k-1)d+1$ and so if $r=k-t_0=k-1,$ the second pseudo-Frobenius  is $F_2=a(d-2)+d-1+(k-1)(ia+d)$.

Rearranging terms we get $F_1=(j+ki-w(d-i)) a+(wd-1)(a+1) $  and $F_2=a((k-1)i+j+d)-(1+j)(a+1)$; if $w=0$,  then $F_1\neq F_2$ since $1<1+j$ and for $w=k$ we have $(k+1)i+j+1=d$ with $F_1=F_2$.
\end{proof}
\begin{remark}
By \rref{w}, the case $d$ divides $a$ in \tref{sss} is almost trivial.  Note also that the condition  $(k+1)i+j\ge d-1$ of \tref{sss} is equivalent to $$ a-(d-i)(k+1)\ge -1\Longleftrightarrow \dfrac{a+1}{d-i}\ge \bigg\lceil\dfrac{a}{d}\bigg\rceil \quad (\tref{l}).$$ See also  \cite[Corollary 1]{rec}.
\end{remark}

\begin{example}For $S=\langle 34,35,46\rangle$, we cannot apply \tref{jj},  applying \tref{sss} we have $d=12$, $a=34=2\cdot12+10$, $i=1$, $k=2$, and $j=10$. Here $w=0$  and $T(S)=\{373, 397\}$. Notice that  \tref{l} gives the same result without the  pairwise coprime condition.
\end{example}
\subsection{Semigroup of three consecutive squares}
The   Frobenius number formulas  of numerical semigroups generated by three consecutive squares were given by M. Lepilov et al.\ \cite{eee} and also by Fel \cite{ee}. Here we compute  their two pseudo-Frobenius numbers.
One can verify that $(n+1)(n+2)^2=(n+1)n^2+4(n+1)^2$  and $n(2n+1)^2=n(n+1)^2+(3n+2)n^2$ for any $n$. 
\begin{lemma}\label{svq}Let $S=\langle (4s-1)^2,(4s)^2,(4s+1)^2\rangle $ and $Q=\langle n^2,(n+1)^2\rangle$  with $n=4s-1$ and  $s>1$; we have $T
= \{256s^3-144s^2+8s-1,272s^3-168s^2+s-2\}$.
\end{lemma}
\begin{proof}First let us write $$(n+2)^2=n^2(n+1)^2- (n^2-2n-4)n^2-(4n-4)(n+1)^2;$$ 
if $n=4s-1$ and $s>1$, then we verify that $s(n+2)^2\in Q$ and $$(s-1)(n+2)^2=(16s^2-15s-1)n^2-(n^2-4n+3)(n+1)^2 \notin Q.$$  From  \dref{km} and \dref{kmm} notation, we have that $S$ has  an \textit{upgrade start} with $s_1=s-1$, $l=n^2-2n-4$, and $h=n^2-4n+3$.
Finally we have: $$\begin{aligned}&n^2(n+1)^2-(n^2-2n-3)n^2-(n+1)^2+(\alpha)(n+2)^2\\&=(4n+4+\alpha(4n+5))n^2-(1+\alpha(4n-4))(n+1)^2,\end{aligned}$$
 we verify
 $$n^2(n+1)^2-(n^2-2n-3)n^2-(n+1)^2+(s-1)(n+2)^2=(s-1)n^2+(16s-8)(n+1)^2$$ and $n^2(n+1)^2-n^2-(n^2-4n+4)(n+1)^2+(n+2)^2=16sn^2$.
\end{proof}

\begin{lemma}\label{szq}Let $S=\langle (4s-3)^2,(4s-2)^2,(4s-1)^2\rangle $ and $Q=\langle n^2,(n+1)^2\rangle$  with $n=4s-3$ and $s>1$; we have $T(S)= \{144s^3-296s^2+193s-42, 160s^3-352s^2+234s-51\}$. 
\end{lemma}
\begin{proof}
If $n=4s-3$ for some  $s>1$, then  $(2s-1)(n+2)^2\in Q$, the highest \textit{under-level} is for $$s(n+2)^2=n^2(n+1)^2-(16s^2-25s+8)n^2-(8s-9)(n+1)^2,$$ thus, we get $l=16s^2-25s+8$  and the highest  \textit{level} is for $h=16s^2-32s+16$ with $$(s-1)(n+2)^2=n^2(n+1)^2-(7s-3)n^2- (16s^2-32s+16)(n+1)^2$$  and $$(2s-2)(n+2)^2=n^2(n+1)^2-(14s-6) n^2-  (16s^2-40s+23) (n+1)^2.$$
Here $s_1=s-1$ and we verify that $$\begin{aligned}&n^2(n+1)^2-n^2- (16s^2-32s+17)(n+1)^2+  \alpha(n+2)^2\\&=(\alpha(4n+5)-1)n^2 -(8-8s+\alpha(4n-4))(n+1)^2,\end{aligned}$$ $$\begin{aligned}&n^2(n+1)^2-(16s^2-25s+9)n^2-(n+1)^2+\alpha(n+2)^2\\&=(9s-5+\alpha(4n+5))n^2 -(1+\alpha(4n-4))(n+1)^2,\end{aligned}$$

 $$n^2(n+1)^2-n^2- (16s^2-32s+17)(n+1)^2+  s(n+2)^2=(9(s-1)+4)n^2 +(n+1)^2,$$ and $ n^2(n+1)^2-(16s^2-25s+9)n^2-(n+1)^2+(s-1)(n+2)^2=(2(s-1))n^2   +(8(s-1))(n+1)^2$.
\end{proof}
\begin{lemma}\label{ssq}Let $S=\langle (4s)^2,(4s+1)^2,(4s+2)^2\rangle $ and $Q=\langle n^2,(n+1)^2\rangle$  with $n=4s$ and $s\ge2$; we have  $T(S)=\{112s^3+48s^2+8s-1,128s^3-20s-5\}$.
\end{lemma}
\begin{proof}Write  $(n+2)^2=n^2(n+1)^2- (n^2-2n-4)n^2-(4n-4)(n+1)^2$; here we need the following expressions: $$\left\{\begin{aligned}[l]s(n+2)^2&=n^2(n+1)^2-(3s+1)n^2-(16s^2-4s)(n+1)^2,\\ (s+1)(n+2)^2&=n^2(n+1)^2-(16s^2-5s-3)n^2-(12s-4)(n+1)^2,\\2s(n+2)^2&=n^2(n+1)^2-(2+6s)n^2-(16s^2-8s)(n+1)^2,\\(2s+1)(n+2)^2&=n^2(n+1)^2-(16s^2-2s-2)n^2-(8s-4)(n+1)^2, \\3s(n+2)^2&=n^2(n+1)^2-(9s+3)n^2-(16s^2-12s)(n+1)^2,\\(3s+1)(n+2)^2&=n^2(n+1)^2-(16s^2+s-1)n^2-(4s-4)(n+1)^2,\\n(n+2)^2&=n^2(n+1)^2-(12s+4)n^2-(16s^2-16s)(n+1)^2.\end{aligned}\right.$$ With $(n+1)(n+2)^2\in Q$, we have $s_1=s$, $h=16s^2-4s$, and $l=16s^2+s-1$.  Finally 
 $$\begin{aligned}&n^2(n+1)^2-n^2-(16s^2-4s+1)(n+1)^2+\alpha(n+2)^2\\&=(\alpha(4n+5)-1)n^2-(1-4s+16\alpha s-4\alpha)(n+1)^2,\end{aligned}$$  $$\begin{aligned}&n^2(n+1)^2-(16s^2+s)n^2-(n+1)^2+\alpha(n+2)^2\\&=  (2n+1-s+\alpha(4n+5))n^2 - (1+\alpha(4n-4))(n+1)^2,\end{aligned}$$

 $$n^2(n+1)^2-n^2-(16s^2-4s+1)(n+1)^2+(3s+1)(n+2)^2=3(n+1)^2+(7s+1)n^2,$$ and $$n^2(n+1)^2-(16s^2+s)n^2-(n+1)^2+s(n+2)^2=  (4s)n^2   +  (4s-1)(n+1)^2.$$

\end{proof}
\begin{lemma}\label{slq}Let $S=\langle (4s-2)^2,(4s-1)^2,(4s)^2\rangle $ and $Q=\langle n^2,(n+1)^2\rangle$  with $n=4s-2$ for $s>1$;  we have $T(S)= \{ 80s^3-80s^2+28s-5,128s^3-160s^2+60s-9\}$. 
\end{lemma}
\begin{proof}
We check that $(n+2)^2=n^2(n+1)^2- (n^2-2n-4)n^2-(4n-4)(n+1)^2$. The potential highest \textit{level} and \textit{under-level} are for the following multiples:$$\left\{\begin{aligned}(s-1)(n+2)^2&=n^2(n+1)^2-(11s-2)n^2-(16s^2-28s+12)(n+1)^2,\\ s(n+2)^2&=n^2(n+1)^2-(16s^2-13s+2)n^2-(4s-4)(n+1)^2,\\(2s-1)(n+2)^2&=n^2(n+1)^2-(6s-1)n^2-(16s^2-24s+8)(n+1)^2,\\(2s)(n+2)^2&=n^2(n+1)^2-(16s^2-18s+3)n^2-(8s-8)(n+1)^2, \\(3s-1)(n+2)^2&=n^2(n+1)^2-(s)n^2-(16s^2-20s+4)(n+1)^2,\\(3s)(n+2)^2&=n^2(n+1)^2-(16s^2-23s+4)n^2-(12s-12)(n+1)^2,\\n(n+2)^2&=n^2(n+1)^2-(12s-2)n^2-(16s^2-32s+12)(n+1)^2,\end{aligned}\right.$$ with $(n+1)(n+2)^2\in Q$. Consequently, $s_1=s-1$, $h=16s^2-20s+4$, and $l=16s^2-13s+2$.  Finally we have
$$\begin{aligned}&n^2(n+1)^2-n^2-(16s^2-20s+5)(n+1)^2+\alpha(n+2)^2\\&=(\alpha(4n+5)-1)n^2-(1-4s+16\alpha s-12\alpha)(n+1)^2,\end{aligned}$$  $$\begin{aligned}&n^2(n+1)^2-(16s^2-13s+3)n^2-(n+1)^2+\alpha(n+2)^2\\&=  (5s-2+\alpha(4n+5))n^2-(1+\alpha(4n-4))(n+1)^2.\end{aligned}$$

Thus,  $$n^2(n+1)^2-n^2-(16s^2-20s+5)(n+1)^2+s(n+2)^2=3(n+1)^2+(5s-2)n^2$$ and $$n^2(n+1)^2-(16s^2-13s+3)n^2-(n+1)^2+(3s-1)(n+2)^2=  (n)n^2   +  (n+1)(n+1)^2.$$
\end{proof}
\lref{sq} and \lref{sqa} can be considered the reduction by 4 ($d=4$ see \rref{w}) of \lref{ssq} and \lref{slq}, respectively .
\begin{lemma}\label{sq}Let $S=\langle (2s)^2,(4s+1)^2,(2s+1)^2\rangle $ and $Q=\langle n^2,(n+1)^2\rangle$  with $n=2s>2$; we have $T(S)=\{32s^3-12s^2-11s-2,28s^3-4s-1\}$. 
\end{lemma}
\begin{proof}First $$(2n+1)^2=n^2(n+1)^2-(2n+1)n^2-(n^2-2n-1)(n+1)^2.$$
 For $n=2s$, the semigroup $S$ has a \textit{downgrade start}  with $s_1=s-1$, $l=n^2-3s-1$, and $h=n^2-2n-1$. By computation we have $$\begin{aligned}&n^2(n+1)^2-n^2-(n^2-2n)(n+1)^2+\alpha(2n+1)^2\\&=(2n+\alpha(2n+1))(n+1)^2-(1+\alpha(2n+1))n^2,\end{aligned}$$

 $$n^2(n+1)^2-n^2-(n^2-2n)(n+1)^2+(s-1)(2n+1)^2=(s-1)(n+1)^2+(7s+1)n^2,$$ and $n^2(n+1)^2-(n^2-3s)n^2-(n+1)^2+ (2n+1)^2=3sn^2+4s(n+1)^2$.
\end{proof}

\begin{lemma}\label{sqa}Let $S=\langle (2s+1)^2,(4s+3)^2,(2s+2)^2\rangle $ and $Q=\langle n^2,(n+1)^2\rangle$  with $n=2s+1> 1$;   we have $T(S)=\{20s^3+28s^2+9s-1,32s^3+44s^2+13s-2 \}$. 
\end{lemma}
\begin{proof}First we got $$(2n+1)^2=n^2(n+1)^2-(2n+1)n^2-(n^2-2n-1)(n+1)^2.$$ Also if $n=2s+1$, then  $s(2n+1)^2\notin Q$ and $$(s+1)(2n+1)^2=(s+1)n^2+(3s+2)(n+1)^2,$$ so $s_1=s$, $l=n^2-s-1$, and $h=n^2-2n-1$. 
$$\begin{aligned}\text{From computation: } &n^2(n+1)^2-n^2-(n^2-2n)(n+1)^2+\alpha(2n+1)^2\\&=(2n+\alpha(2n+1))(n+1)^2-(1+\alpha(2n+1))n^2.\end{aligned}$$ 

By a direct verification
 $$n^2(n+1)^2-n^2-(n^2-2n)(n+1)^2+s(2n+1)^2=(5s+3)n^2+(3s+1)(n+1)^2$$ and $n^2(n+1)^2-(n^2-s)n^2-(n+1)^2+(2n+1)^2=sn^2+(4s+2)(n+1)^2$.
\end{proof} 
\subsection{A cyclic semigroup}
The next lemma is the particular three dimensional case  of Prof.\ Dao's question at: \url{http://mathoverflow.net/questions/292507/a-formula-for-frobenius-number-of-certain-numerical-semigroups}.
\begin{lemma}\cite[Proposition 10.16]{w} Let $a,b,c$ be three positive integers such that $ab+a+1$ and $ca+c+1$ are coprime. For $S= \langle ab+a+1,bc+b+1,ca+c+1\rangle$,   we have $T(S)=\{abc-1,2(abc-1)\}$.
\end{lemma}
\begin{proof}
First it is clear from the cyclic expression that, if any two of the generators  share a common factor $d$, the third one will be divisible by $d$. Assuming the hypothesis, we write
$$bc+b+1=(ab+a+1)(ca+c+1)-   c(ab+a+1)- ((b+1)a-b) (ca+c+1).$$   The generators are  $a_3=bc+b+1$ and  $Q=\langle ab+a+1,ca+c+1\rangle$.

We get $m=a+1$ (see \dref{km}) and $$\begin{aligned}(ab+a+1)(ca+c+1)-  & (ab+a+1)- ((b+1)a-b+1) (ca+c+1)+\alpha a_3\\&=(b+\alpha(b+1))(ca+c+1)-(1+\alpha c)(ab+a+1).\end{aligned}$$

Finally  we verify $$\begin{aligned}F_1&=(ab+a+1)(ca+c+1)-   (ab+a+1)- ((b+1)a-b+1) (ca+c+1)\\&+(a-1)(bc+b+1)=2(abc-1)\end{aligned}$$ and  $F_2=(ab+a+1)(ca+c+1)-   (ca+1)(ab+a+1)-  (ca+c+1)=abc-1$.
\end{proof}

\section{Acknowledgments}
 I  thank  the \url{http://www.les-mathematiques.net} site and the reviewers for the corrections and advice. Computations were assisted through Maple software.

\end{document}